\newtheorem{Theorem}{Theorem}[section]
\newtheorem{Remark}{Remark}[section]
\newtheorem{Lemma}{Lemma}[section]
\newtheorem{Definition}{Definition}[section]
\renewcommand\@biblabel[1]{#1.}
\begin{document}

\title{\bf Characterizations of Nonsmooth Robustly Quasiconvex Functions}

\author{Hoa T. Bui\footnote{Centre for Informatics and Applied Optimization, Faculty of Science and Technology, Federation University Australia, POB	663, Ballarat, Vic, 3350, Australia.
		E-mail: h.bui@federation.edu.au}, Pham  Duy Khanh\footnote{Department of Mathematics, HCMC University of Pedagogy, 280 An Duong Vuong, Ho Chi Minh, Vietnam and Center for Mathematical
		Modeling, Universidad de Chile, Beauchef 851, Edificio Norte, Piso 7, Santiago, Chile. E-mails:
		pdkhanh182@gmail.com; pdkhanh@dim.uchile.cl},Tran Thi Tu Trinh\footnote{Department of Mathematics and Statistics, Oakland University, 318 Meadow Brook Rd, Rochester, MI 48309, USA. Email:
		thitutrinhtran@oakland.edu}} \maketitle
\date{}

\medskip
\begin{quote}
\noindent {\bf Abstract} Two criteria for the robust quasiconvexity  of lower semicontinuous functions  are established in terms of Fr\'echet  subdifferentials in Asplund spaces.

\medskip
\noindent {\bf Keywords}\ Quasiconvexity, robust quasiconvexity, quasimonotone,  Fr\'echet subdifferential, approximate mean value theorem

\medskip
\noindent {\bf Mathematics Subject Classification (2010)} 26A48, 26A51, 49J52, 49J53
\end{quote}
\section{Introduction}
The question of characterizing convexity and generalized convexity properties in terms of subdifferentials  receives tremendous attention in optimization theory and variational analysis.  For decades, there have been received many significant contributions devoted to this question such as \cite{Clarke83,CorreaJofreThibault,Mordukhovich06,Poliquin90,Rock70} for convex functions, \cite{AusCorMar94,AusCorMar95,Aus98,BarronGoebelJensen121,Luc93,PenotQuang97} for quasiconvex functions and \cite{BarronGoebelJensen121} for robustly quasiconvex functions. 

\medskip
This paper follows this stream of research. Our aim is to establish the first-order characterizations for the robust quasiconvexity of lower semicontinuous functions in Asplund spaces. First,  some existing results regarding to the properties of subdifferential operators of convex, quasiconvex functions are recalled in Section 2, where the definitions and some basic results are given as well. Besides, necessary and sufficient first-order conditions for a lower semicontinuous function to be quasiconvex are reconsidered. Those characterizations moreover could be used to characterize the Asplund property of the given space. Second, two criteria for the robust quasiconvexity  of lower semicontinuous functions in Asplund spaces  are obtained by using  Fr\'echet  subdifferentials  in Section 3. Each criterion corresponds to each type of analogous conditions for quasiconvexity. The first one is based on the zero and first order condition for quasiconvexity (see Theorem~\ref{fod}(b) in Section 2).  It extends \cite[Proposition~5.3]{BarronGoebelJensen121} from finite dimensional spaces to Asplund spaces. Moreover, its proof also overcomes a glitch in the proof of the sufficient condition of \cite[Proposition~5.3]{BarronGoebelJensen121}. The second criterion is totally new. It is settled from the equivalence of the quasiconvexity of lower semicontinuous functions and the quasimonotonicity
 of their subdifferential operators (see Theorem~\ref{fod}(c) in Section 2).

\section{Preliminaries}
Let $X$ be a Banach space and $X^*$ its dual space. 
$X$ is called an Asplund space, or has the Asplund property, if  every separable subspace $Y$ of $X$ has separable continuous dual space $Y^*$.
The duality pairing on $X\times X^*$ is denoted by $\langle .,.\rangle$. 
In what follows, $\overline{\mathbb{R}}:=]-\infty,\infty]$;  $\mathbb{B}_r(x)$ is the open ball of radius $r>0$ centered at $x\in X$ and 
$\mathbb{B^*}\subset X^*$ is the closed ball of radius $1$ centered at $0_{X^*}$. 
The extended real-valued function $\varphi: X\to \overline{\mathbb{R}}$ considered mostly is proper lower semicontinuous (l.s.c), i.e. $\varphi$ is not identically $+\infty$, and the lower level sets $\varphi^{\le}_\alpha:=\{x\in X: \varphi(x)\le \alpha\}$ are closed for all $\alpha\in \mathbb{R}$. As usual dom$\varphi$ stands for the domain of $\varphi$, defined as 
$$
\text{dom}\varphi:=\{x\in X: \varphi(x)<\infty\}.
$$
For a set-valued mapping $A:X\rightrightarrows X^*$, the domain of $A$ is written
$$
\text{dom}A:=\{x\in X: A(x)\neq \emptyset\}.
$$
The graphs of $\varphi$ and $A$ are respectively defined as
 $$
 \text{graph}\varphi:=\{(x,\alpha)\in X\times\mathbb{R}: \varphi(x)=\alpha\},
 $$
 $$
 \text{graph}A:=\{(x,x^*)\in X\times X^*: x^*\in A(x)\}.
 $$ 
 A subset $U$ of $X$ is convex if it contains all closed segments connecting two points in $U$.
The function $\varphi$ is said to be convex if the domain of $\varphi$ is convex and for any $\alpha\in [0,1]$, $x,y\in $ dom$\varphi$ we always have the inequality $\varphi(\alpha x+ (1-\alpha)y)\le \alpha\varphi(x)+(1-\alpha)\varphi(y)$. 

\medskip
As usual, the Fr\'echet subdifferential of  a proper lower semicontinuous function $\varphi$ is the set-valued mapping $\widehat{\partial}\varphi: X \rightrightarrows X^*$ defined by
$$
\widehat{\partial}\varphi(x):=\left\{x^*\in X^*: \liminf_{y\to x}\frac{\varphi(y)-\varphi(x)-\langle x^*,y-x\rangle}{\|y-x\|}\ge 0\right\},\; \text{ for all } x\in \text{ dom}\varphi.
$$
When $\varphi$ is convex, the Fr\'echet subdifferential reduces to the convex analysis subdifferential 
$$
\widehat{\partial}\varphi(x)=\partial \varphi(x):=\{x^*\in X^*: \langle x^*,y-x\rangle\le \varphi(y)-\varphi(x)\},\; \text{ for all } x\in \text{ dom}\varphi.
$$
An operator $A$ is monotone if for all $x,y \in$ dom$A$, one has $\langle x^*-y^*,x-y\rangle\ge 0$ with $x^*\in A(x), y^*\in A(y)$.
It is well-known that when $\varphi$ is convex, the operator $\widehat\partial\varphi$ is monotone \cite{Rock70}. The inverse implication also holds in Asplund space \cite[Theorem 3.56]{Mordukhovich06}; but it is not true in general Banach spaces. The reader is referred to the proof of the reverse implication in \cite[Theorem 2.4]{Trang12} for a counter-example.

\medskip
Let us recall some notions of generalized convex functions. 
\begin{Definition}\label{Def}{\rm
		A function $\varphi: X\to \overline{\mathbb{R}}$ is
		\begin{enumerate}
			\item \textbf{quasiconvex} if 
			\begin{equation}
			\label{quasi}
			\forall x,y\in X, \lambda\in ]0,1[,\quad f(\lambda x+(1-\lambda)y)\le \max\{f(x),f(y)\}.
			\end{equation}
			\item \textbf{$\alpha$-robustly quasiconvex} with $\alpha>0$ if, for every $v^*\in\alpha\mathbb{B}^*$, the function $\varphi_{v^*}:x\mapsto \varphi(x)+\langle v^*,x\rangle$ is quasiconvex. 
		\end{enumerate}
	}
\end{Definition}
Clearly, $\varphi$ is $\alpha$-robustly quasiconvex iff  the function $\varphi_{v^*}$ is quasiconvex for all $v^*\in X^*$ such that $\|v^*\|<\alpha$.

Tracing back to the original definition of robustly quasiconvex functions, they were first defined in \cite{PhuAn96} under the name ``s-quasiconvex'' or ``stable quasiconvex'', and then renamed ``robustly quasiconvex'' in \cite{BarronGoebelJensen12}. This class of functions holds a notable role, as many important optimization properties of generalized convex functions are stable when disturbed by a linear functional with a sufficiently small norm (for instance, all lower level sets are convex, each minimum is global minimum, each stationary point is a global minimizer). For interested readers, we refer to \cite{PhuAn96} again, and further related works  \cite{An061,BarronGoebelJensen12}.
\begin{Definition}
	An operator $A: X\rightrightarrows X^*$ is quasimonotone if for all $x,y\in X$ and $x^*\in A(x),y^*\in A(y)$ we have $\min\{\langle x^*,y-x\rangle,\langle y^*,x-y\rangle\}\leq 0$.
\end{Definition}
Significant contributions concerning dual criteria for quasiconvex functions are in \cite{AusCorMar94,Aus98}. Those characterizations are applicable for a wide range of subdifferentials, for instance Rockafellar-Clarke subdifferentials in Banach spaces, and Fr\'echet subdifferentials in reflexive spaces. These results are still unclear for Fr\'echet subdifferentials in Asplund spaces. Below, we give a short proof to clarify this. Our proof relies on the proof scheme of \cite{AusCorMar94} and the following approximate mean value theorem \cite[Theorem 3.49]{Mordukhovich06}.
\begin{Theorem}\label{M.v.t}
	Let $X$ be an Asplund space and $\varphi: X\to \overline{\mathbb{R}}$ be a proper lower semicontinuous function finite at two given points $a\neq b$. Consider any point $c\in [a,b)$ at which the function 
	$$
	\psi(x):=\varphi(x)-\frac{\varphi(b)-\varphi(a)}{\|a-b\|}\|x-a\|
	$$
	attains its minimum on $[a,b]$; such a point always exists. Then, there are sequences $x_k\overset{\varphi}{\to} c$ and $x^*_k\in \widehat{\partial}\varphi(x_k)$ satisfying
	\begin{equation}\label{mean_1}
	\liminf_{k\to \infty}\langle x^*_k,b-x_k\rangle\ge \frac{\varphi(b)-\varphi(a)}{\|a-b\|}\|b-c\|,
	\end{equation}
	\begin{equation}\label{mean_2}
	\liminf_{k\to \infty} \langle x_k^*,b-a\rangle\ge \varphi(b)-\varphi(a).	
	\end{equation}
	Moreover, when $c\neq a$ one has 
	\begin{equation}\label{mean_3}
	\lim_{k\to \infty}\langle x^*_k,b-a\rangle=\varphi(b)-\varphi(a).
	\end{equation}
\end{Theorem}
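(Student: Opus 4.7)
The plan is to combine Ekeland's variational principle with the fuzzy sum rule for Fréchet subdifferentials, both of which are the standard workhorses in Asplund spaces. The argument breaks into three parts: existence of the minimizer $c$, a variational selection of the sequences $(x_k,x_k^*)$, and extraction of the three inequalities from the structure of the auxiliary function $\psi$.

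For existence, set $r:=(\varphi(b)-\varphi(a))/\|a-b\|$ and observe the identity $\psi(a)=\varphi(a)=\psi(b)$. Since $\psi$ is lower semicontinuous on the compact segment $[a,b]$, it attains its minimum, and the equality of the endpoint values guarantees that a minimizer may always be chosen in $[a,b)$. For the variational step, note that $c$ is a global minimizer on $X$ of the function $\Phi:=\varphi+\delta_{[a,b]}-r\|\cdot-a\|$, where $\delta_{[a,b]}$ is the indicator of the segment. I would apply Ekeland's principle with parameters $\varepsilon_k\downarrow 0$ to obtain strict approximate minimizers $\tilde{x}_k\to c$ of $\Phi+\varepsilon_k\|\cdot-\tilde{x}_k\|$, and then invoke the Asplund-space fuzzy sum rule on the decomposition of $\Phi$ (treating $-r\|\cdot-a\|$ through its Lipschitz structure). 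This yields points $x_k\in\text{dom}\,\varphi$ with $x_k\to c$, $\varphi(x_k)\to\varphi(c)$, and Fr\'echet subgradients $x_k^*\in\widehat{\partial}\varphi(x_k)$ whose distance to $r\cdot\widehat{\partial}\|\cdot-a\|(x_k)+\widehat{N}([a,b];x_k)$ tends to zero.

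To extract the inequalities, I would test $x_k^*$ against the direction $b-x_k$. Since $b-x_k$ points into the segment from $x_k$, the normal-cone contribution on it is nonpositive, while the subdifferential of $\|\cdot-a\|$ at $x_k$ evaluated on $b-x_k$ equals $\|b-x_k\|$ because the two vectors are collinear and similarly oriented. Passing to $\liminf$ and using $\|b-x_k\|\to\|b-c\|$ delivers (\ref{mean_1}); replacing $b-x_k$ by $b-a$ and rescaling gives (\ref{mean_2}). For the equality (\ref{mean_3}), the hypothesis $c\neq a$ permits the reverse Fr\'echet subgradient inequality from $x_k$ toward $a$, producing a matching upper bound on $\langle x_k^*,b-a\rangle$ that combines with (\ref{mean_2}) to pin the limit.

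The hard part, I expect, is the careful localization: ensuring the approximate minimizers actually stay in (or close enough to) $[a,b]$ so that the normal-cone term can be effectively neutralized along the test direction, and simultaneously that the convergence $x_k\to c$ is \emph{$\varphi$-attentive}, i.e.\ $\varphi(x_k)\to\varphi(c)$. The nonconvex term $-r\|\cdot-a\|$ is where the Asplund-space hypothesis does genuine work, since the fuzzy sum rule is not available in arbitrary Banach spaces. Finally, the dichotomy between $c=a$ and $c\neq a$ is precisely the reason (\ref{mean_3}) requires a separate hypothesis: the backward subgradient argument toward $a$ only has room to operate when $\|x_k-a\|$ stays bounded away from zero.
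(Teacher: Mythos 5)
A preliminary remark: the paper offers no proof of this theorem at all --- it is imported wholesale as \cite[Theorem~3.49]{Mordukhovich06} and used as a black box, so your attempt can only be measured against the standard proof in that reference. Your general strategy --- existence of $c$ from $\psi(a)=\psi(b)=\varphi(a)$ and lower semicontinuity on the compact segment, then viewing $c$ as an unconstrained minimizer of $\varphi+\delta_{[a,b]}-r\Vert\cdot-a\Vert$ and decoupling via an Ekeland-plus-fuzzy-sum-rule argument --- is indeed the classical route, and the existence part is correct.

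The gaps sit exactly where you wave at ``the hard part,'' and they are genuine. First, your decomposition has \emph{two} non-Lipschitz summands, $\varphi$ and $\delta_{[a,b]}$; the fuzzy sum rule that is standardly available in Asplund spaces (Fabian's semi-Lipschitzian rule, Theorem~2.33 in \cite{Mordukhovich06}) requires all but one summand to be Lipschitz near the reference point, and even then it places the subgradients of the different summands at \emph{different} nearby points. So you cannot obtain, as you claim, a single $x_k$ with $x_k^*\in\widehat{\partial}\varphi(x_k)$ close to $r\,\partial\Vert\cdot-a\Vert(x_k)+\widehat{N}([a,b];x_k)$ by ``invoking the fuzzy sum rule on the decomposition.'' Second, because $\varphi$ is only l.s.c., the norms $\Vert x_k^*\Vert$, hence those of the normal components, need not stay bounded; consequently the error terms incurred by evaluating the norm-subgradient and the normal vector at points shifted away from $x_k$ are not automatically negligible, and for the same reason \eqref{mean_2} does not follow from \eqref{mean_1} by the rescaling you describe. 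Third, when $c=a$ the collinearity argument collapses: a Fr\'echet subgradient $u_k^*$ of $\Vert\cdot-a\Vert$ taken at a point $y_k$ close to $a$ but off the segment satisfies only $\Vert u_k^*\Vert\le 1$ and $\langle u_k^*,y_k-a\rangle=\Vert y_k-a\Vert$, and need not align with $b-a$ at all, so $\langle u_k^*,b-x_k\rangle$ can even be negative and \eqref{mean_1}--\eqref{mean_2} are not recovered in precisely the case the theorem takes care to include. Closing these three points is the actual content of the proof in \cite{Mordukhovich06}; as the paper does, the cleanest course is to cite that result rather than reprove it.
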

Theorem~\ref{M.v.t} allows us to deduce the following three-points lemma which is similar to \cite[Lemma 3.1]{AusCorMar95}. 
\begin{Lemma} \label{Distance}
Let $\varphi:X\rightarrow\overline{\mathbb{R}}$ be a proper, lower semicontinuous function on an Asplund space $X$. Let $u,v,w\in X$ such that $v\in[u,w]$, $\varphi(v)>\varphi(u)$ and $\lambda>0$. Then, there are $\bar{x}\in \text{dom}\varphi$ and $\bar{x}^*\in\widehat{\partial}\varphi(\bar{x})$ such that
$$
\bar{x}\in \mathbb{B}_{\lambda}([u,v]) \quad\text{and}\quad \langle \bar{x}^*,w-\bar{x}\rangle>0,
$$
where
$$
\mathbb{B}_{\lambda}([u,v]):=\{x\in X: \exists y\in [u,v] \;\text{such that}\;\|x-y\|<\lambda\}.
$$
\end{Lemma}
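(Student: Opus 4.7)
My plan is to apply the approximate mean value theorem (Theorem~\ref{M.v.t}) to the pair of points $a=u$ and $b=v$. Since $\varphi(v)>\varphi(u)$, in particular $v\neq u$, so the auxiliary function
$$
\psi(x)=\varphi(x)-\frac{\varphi(v)-\varphi(u)}{\|u-v\|}\|x-u\|
$$
is well-defined and attains its minimum on $[u,v]$ at some $c\in[u,v)$. Theorem~\ref{M.v.t} then yields sequences $x_k\xrightarrow{\varphi}c$ and $x_k^*\in\widehat{\partial}\varphi(x_k)$ satisfying both \eqref{mean_1} and \eqref{mean_2} with $a=u$, $b=v$.

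Next I would exploit the collinearity coming from $v\in[u,w]$. Since $v\ne u$, I write $v=u+s(w-u)$ for some $s\in(0,1]$, so that
$$
w-v=\frac{1-s}{s}(v-u)=:\mu(v-u), \qquad \mu\ge 0.
$$
Decomposing $w-x_k=(v-x_k)+\mu(v-u)$ gives
$$
\langle x_k^*,w-x_k\rangle=\langle x_k^*,v-x_k\rangle+\mu\langle x_k^*,v-u\rangle,
$$
and applying $\liminf$ together with \eqref{mean_1} and \eqref{mean_2} yields
$$
\liminf_{k\to\infty}\langle x_k^*,w-x_k\rangle\ge \frac{\varphi(v)-\varphi(u)}{\|u-v\|}\|v-c\|+\mu\bigl(\varphi(v)-\varphi(u)\bigr).
$$
The right-hand side is strictly positive: the first summand is positive because $\varphi(v)>\varphi(u)$ and $c\in[u,v)$ forces $\|v-c\|>0$, and the second summand is nonnegative. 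Hence $\langle x_k^*,w-x_k\rangle>0$ for all sufficiently large $k$.

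Finally, since $x_k\to c\in[u,v]$, for $k$ large we also have $\|x_k-c\|<\lambda$, so $x_k\in \mathbb{B}_{\lambda}([u,v])$. Membership $x_k^*\in\widehat{\partial}\varphi(x_k)$ forces $x_k\in\text{dom}\,\varphi$. Choosing $\bar{x}=x_k$ and $\bar{x}^*=x_k^*$ for such a $k$ delivers the conclusion. The main point requiring care is the reduction from $v$-direction to $w$-direction; fortunately the positivity of $\mu$ and of $\varphi(v)-\varphi(u)$ means the additional term only helps, and the edge case $v=w$ (i.e.\ $s=1$, $\mu=0$) is covered directly by \eqref{mean_1} alone. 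The existence of a minimizer $c$ in $[u,v)$ (not just $[u,v]$) provided by Theorem~\ref{M.v.t} is exactly what supplies the strict inequality $\|v-c\|>0$, so no separate argument is needed to rule out $c=v$.
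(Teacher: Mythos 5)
The paper never prints a proof of this lemma (it only points to Theorem~\ref{M.v.t} and to \cite[Lemma~3.1]{AusCorMar95}), and your argument is exactly the intended route: apply the approximate mean value theorem on $[u,v]$, then use the collinearity $w-v=\mu(v-u)$ with $\mu\ge 0$ to combine \eqref{mean_1} and \eqref{mean_2} into $\liminf_k\langle x_k^*,w-x_k\rangle>0$. That part is correct, including the use of $c\in[u,v)$ to guarantee $\|v-c\|>0$ and the superadditivity of $\liminf$ (both lower bounds are finite, so no $\infty-\infty$ issue).

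There is, however, one genuine gap: you assert that $\psi$ is ``well-defined,'' but the hypothesis $\varphi(v)>\varphi(u)$ does not place $v$ in $\text{dom}\,\varphi$; the case $\varphi(v)=+\infty$ is permitted, and then Theorem~\ref{M.v.t} cannot be invoked with $b=v$, since it requires $\varphi$ to be finite at both endpoints. This case is not vacuous: the lemma is applied in the proof of Theorem~\ref{fod} (c)$\Rightarrow$(a) at a midpoint where $\varphi$ may well equal $+\infty$. A clean patch is truncation: fix any real $m>\varphi(u)$ and set $\tilde\varphi:=\min\{\varphi,m\}$, which is proper, lower semicontinuous, and satisfies $\tilde\varphi(v)=m>\varphi(u)=\tilde\varphi(u)$; run your argument for $\tilde\varphi$. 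The minimizer $c\in[u,v)$ of the corresponding $\tilde\psi$ satisfies $\tilde\varphi(c)\le\varphi(u)+\frac{m-\varphi(u)}{\|u-v\|}\|c-u\|<m$ because $\|c-u\|<\|v-u\|$, so from $\tilde\varphi(x_k)\to\tilde\varphi(c)$ one gets $\varphi(x_k)=\tilde\varphi(x_k)<m$ for large $k$; and since $\tilde\varphi\le\varphi$ with equality at such $x_k$, the definition of the Fr\'echet subdifferential gives $\widehat{\partial}\tilde\varphi(x_k)\subset\widehat{\partial}\varphi(x_k)$, so the subgradients produced for $\tilde\varphi$ serve for $\varphi$. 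With this addition your proof is complete.
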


We are in position to establish characterizations of quasiconvexity in terms of Fr\'echet subdifferentials in Asplund spaces.

\begin{Theorem}\label{fod} Let $\varphi:X\rightarrow\overline{\mathbb{R}}$ be a proper lower semicontinuous function on an Asplund space $X$. The following statements are equivalent
	\begin{itemize}
		\item[{\rm(a)}] $\varphi$ is quasiconvex;
		\item[{\rm(b)}] If there are $x,y\in X$ such that $\varphi(y)\le\varphi(x)$, then $\langle x^*, y-x\rangle \le 0$ for all $x^*\in \widehat\partial \varphi(x)$.
		\item[{\rm(c)}]  $\widehat{\partial}\varphi$ is quasimonotone.
	\end{itemize}
\end{Theorem}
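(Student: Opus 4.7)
The plan is to establish the three-way equivalence by proving the cycle $(a) \Rightarrow (b) \Rightarrow (c) \Rightarrow (a)$. The first two implications are short. For $(a) \Rightarrow (b)$, given $\varphi(y) \le \varphi(x)$ I will use quasiconvexity to conclude $\varphi(x + t(y - x)) \le \varphi(x)$ for every $t \in (0,1]$, then combine this with the liminf inequality defining $\widehat{\partial}\varphi(x)$ along the direction $y - x$ and let $t \to 0^+$ to force $\langle x^*, y - x\rangle \le 0$ for every $x^* \in \widehat{\partial}\varphi(x)$. For $(b) \Rightarrow (c)$, I will argue by contrapositive: if $\widehat{\partial}\varphi$ is not quasimonotone there are $x, y$ and $x^* \in \widehat{\partial}\varphi(x)$, $y^* \in \widehat{\partial}\varphi(y)$ with $\langle x^*, y - x\rangle > 0$ and $\langle y^*, x - y\rangle > 0$, whence (b) applied to $x^*$ forces $\varphi(y) > \varphi(x)$ while (b) applied to $y^*$ forces $\varphi(x) > \varphi(y)$, a contradiction.

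The substantive step is $(c) \Rightarrow (a)$, which I will prove by following the Aussel--Corvellec--Lassonde scheme of \cite{AusCorMar94} adapted to the Fr\'echet setting via Lemma~\ref{Distance}. Arguing by contrapositive, if $\varphi$ is not quasiconvex I fix $u, w \in X$ and $\mu \in (0,1)$ such that $v := (1-\mu)u + \mu w$ satisfies $\varphi(v) > \max\{\varphi(u), \varphi(w)\}$. Applying Lemma~\ref{Distance} to the triple $(u, v, w)$ with a small tubular parameter $\lambda > 0$ furnishes $\bar{x} \in \mathbb{B}_\lambda([u, v])$ and $\bar{x}^* \in \widehat{\partial}\varphi(\bar{x})$ with $\langle \bar{x}^*, w - \bar{x}\rangle > 0$; applying it symmetrically to $(w, v, u)$ furnishes $\bar{y} \in \mathbb{B}_\lambda([w, v])$ and $\bar{y}^* \in \widehat{\partial}\varphi(\bar{y})$ with $\langle \bar{y}^*, u - \bar{y}\rangle > 0$. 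Because $\bar{x}$ sits near $[u, v]$ and $\bar{y}$ near $[v, w]$, the vector $\bar{y} - \bar{x}$ is, up to an $O(\lambda)$ perturbation, both a positive multiple of $w - u$ and a nonnegative convex combination of $w - \bar{x}$ with a small remainder; propagating the positivity of $\langle \bar{x}^*, w - \bar{x}\rangle$ along this decomposition gives $\langle \bar{x}^*, \bar{y} - \bar{x}\rangle > 0$, and the symmetric argument gives $\langle \bar{y}^*, \bar{x} - \bar{y}\rangle > 0$, in direct violation of the quasimonotonicity asserted in (c).

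The most delicate point is the propagation in the last sentence: the perturbation error scales as $\|\bar{x}^*\|\lambda$, and Lemma~\ref{Distance} does not by itself bound the subgradient norm. To make the argument rigorous I would unpack Lemma~\ref{Distance} and work with the approximating sequences $(\bar{x}_k, \bar{x}_k^*)$ and $(\bar{y}_k, \bar{y}_k^*)$ produced by the approximate mean value theorem (Theorem~\ref{M.v.t}) on the pairs $(u, v)$ and $(w, v)$; estimate \eqref{mean_2} supplies a strictly positive lower bound on $\langle \bar{x}_k^*, v - u\rangle$ independent of $k$, which quantitatively controls the principal term in the expansion of $\langle \bar{x}_k^*, \bar{y}_k - \bar{x}_k\rangle$ against the $\lambda$-scale remainder, and choosing $k$ large along appropriate subsequences then yields the required contradiction.
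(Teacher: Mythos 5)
Your implications (a)$\Rightarrow$(b) and (b)$\Rightarrow$(c) are correct. For (a)$\Rightarrow$(b) you argue directly from the $\liminf$ in the definition of $\widehat{\partial}\varphi(x)$ restricted to the ray $x+t(y-x)$, whereas the paper passes through the convex indicator of the sublevel set $S_x$ and the inclusion $\widehat{\partial}\varphi(x)\subset\widehat{\partial}(\delta_{S_x}+\varphi(x))(x)$; both are valid, and yours is arguably more elementary. Your (b)$\Rightarrow$(c) is the paper's argument verbatim.

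The gap is in (c)$\Rightarrow$(a), precisely at the point you flag as delicate. You apply Lemma~\ref{Distance} (equivalently, Theorem~\ref{M.v.t}) \emph{symmetrically} to the triples $(u,v,w)$ and $(w,v,u)$, obtaining sequences $(\bar x_k,\bar x_k^*)$ with $\bar x_k\to c_x\in[u,v)$ and $(\bar y_j,\bar y_j^*)$ with $\bar y_j\to c_y\in[w,v)$, and then try to establish \emph{both} inequalities $\langle \bar x_k^*,\bar y_j-\bar x_k\rangle>0$ and $\langle \bar y_j^*,\bar x_k-\bar y_j\rangle>0$ by perturbation. The first requires $\|\bar x_k^*\|\,\|\bar y_j-c_y\|$ to be small, i.e.\ $j$ large relative to $\|\bar x_k^*\|$; the second requires $\|\bar y_j^*\|\,\|\bar x_k-c_x\|$ to be small, i.e.\ $k$ large relative to $\|\bar y_j^*\|$. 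Since Theorem~\ref{M.v.t} gives no bound on the subgradient norms, both $\|\bar x_k^*\|$ and $\|\bar y_j^*\|$ may blow up arbitrarily fast, and the two requirements are then mutually incompatible: whichever index you fix first, the other must be taken so large that the first condition is destroyed. No choice of subsequences repairs this, so the parallel scheme does not close.

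The standard (Aussel--Corvellec--Lassonde) argument, which the paper's one-line proof is pointing to, is \emph{asymmetric} and avoids the second error term entirely. After using lower semicontinuity to fix $\rho>0$ with $\varphi>\max\{\varphi(u),\varphi(w)\}$ on $\mathbb{B}_{2\rho}(v)$, apply Lemma~\ref{Distance} once to $(u,v,w)$ with parameter $\rho$ to fix a single pair $(\bar x,\bar x^*)$ with $\beta:=\langle \bar x^*,w-\bar x\rangle>0$. Since $\bar x$ lies within $\rho$ of $[u,v]$, the segment $]\bar x,w[$ contains a point $v''\in\mathbb{B}_{\rho}(v)$, so $\varphi(v'')>\varphi(w)$, and $v''=\bar x+\tau(w-\bar x)$ with $\tau>0$ fixed. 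Now apply Lemma~\ref{Distance} a \emph{second} time to the triple $(w,v'',\bar x)$ --- note the third slot is $\bar x$ itself --- with a radius $\lambda_2>0$ chosen \emph{after} $\bar x^*$ is known, namely $\lambda_2\|\bar x^*\|<\tau\beta/2$. This produces $(\bar y,\bar y^*)$ with $\langle \bar y^*,\bar x-\bar y\rangle>0$ \emph{exactly} (no perturbation needed), while $\bar y$ lies within $\lambda_2$ of $[w,v'']\subset[w,\bar x]$, so $\langle \bar x^*,\bar y-\bar x\rangle\ge \tau\beta-\lambda_2\|\bar x^*\|>0$. Only one error estimate occurs, and it involves only the already-fixed $\|\bar x^*\|$; the pair then contradicts quasimonotonicity. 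You should restructure your (c)$\Rightarrow$(a) along these lines.
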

\begin{proof}
	(a)$\Rightarrow$(b) Assume that $x,y\in X$, $\varphi(x)\ge \varphi(y)$, and $x^*\in \widehat\partial \varphi(x)$. Consider $S_x:=\{u\in X: \varphi(u)\le\varphi(x)\}$. Since $\varphi$ is quasiconvex, then $S_x$ is a convex set. Thus, we have the function $f:=\delta_{S_x}+\varphi(x)$ is convex, where $\delta_{S_x}$ is equal to $0$ for $u\in S_x$ and to $\infty$ otherwise.
	On the other hand, $f(x)=\varphi(x)$ and $f(u)\ge \varphi(u)$ for all $u\in X$, thus $\widehat\partial \varphi(x)\subset \widehat\partial f(x)$. By the definition of convex subdifferential, since $x^*\in \widehat\partial \varphi(x)\subset \widehat\partial f(x)$, we have $\langle x^*, y-x\rangle \le 0$.
	
	\medskip\noindent
	(b)$\Rightarrow$(c) Assume that there are $x,y\in X$ and $x^*\in \widehat\partial \varphi(x)$, $y^*\in \widehat\partial \varphi(y)$ such that $\langle x^*,x-y\rangle<0$ and $\langle y^*,x-y\rangle>0$. Then, by (b),  $\varphi(x)<\varphi(y)$ and $\varphi(y)<\varphi(x)$, which is a contradiction.
	
	\medskip\noindent
	(c)$\Rightarrow$(a) By using Lemma~\ref{Distance}, the proof of this assertion is similar to one in \cite[Theorem 4.1]{AusCorMar94}.
\end{proof}
\begin{Remark}
{\rm Observe that the implications $(a)\Rightarrow(b)$ and $(b)\Rightarrow(c)$ hold in Banach spaces while $(c)\Rightarrow(a)$ only holds in Asplund spaces. In fact, the equivalence of these statements actually can characterize the Asplund property in the sense that if $X$ is not an Asplund space, then there is a function $\varphi$ whose Fr\'echet subdiferential satisfies (b) and (c) but is not quasiconvex. Such a function $\varphi$ can be found in \cite[Theorem~2.4]{Trang12}.
}
\end{Remark}
\section{Characterizations of Robustly Quasiconvex Functions}
A zero and first order characterization of robust convexity was given in  \cite[Proposition 5.3]{BarronGoebelJensen121} for finite dimensional spaces. We remark that there is an oversight in the proof given there; although the function $f$ is only assumed to be lower semicontinuous, the existence of $z$ in the second paragraph actually requires continuity. Here we show that this conclusion is still correct not only when $f$ is assumed just to be lower semicontinuous, but also when $X$ is only assumed to be an Asplund space. To derive this generalization, we need the following lemmas, revealing that  quasiconvex functions have certain nice properties which resemble those of convex functions.
\begin{Lemma}\label{ltttheotia}
If $\varphi: X\rightarrow\overline{\mathbb{R}}$ is a quasiconvex and lower  semicontinuous function, and $u, v\in X$ are such that $\varphi(v)\geq \varphi(u)$ then
	\begin{equation}\label{QuasiLimit}
	\lim_{t \downarrow 0} \varphi(v+ t(u - v)) = \varphi(v).
	\end{equation}
\end{Lemma}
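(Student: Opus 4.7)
The plan is to sandwich the one-sided limit between matching upper and lower bounds: quasiconvexity gives the upper bound $\le \varphi(v)$, and lower semicontinuity gives the lower bound $\ge \varphi(v)$.

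For the upper bound, I would fix $t\in(0,1)$ and write $v+t(u-v)=(1-t)v+tu$. Then quasiconvexity yields
$$
\varphi(v+t(u-v)) \;\le\; \max\{\varphi(v),\varphi(u)\} \;=\; \varphi(v),
$$
where the equality uses the hypothesis $\varphi(v)\ge \varphi(u)$. Letting $t\downarrow 0$ gives $\limsup_{t\downarrow 0}\varphi(v+t(u-v))\le \varphi(v)$.

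For the lower bound, I would use that $v+t(u-v)\to v$ in norm as $t\downarrow 0$, so lower semicontinuity of $\varphi$ at $v$ directly gives $\liminf_{t\downarrow 0}\varphi(v+t(u-v))\ge \varphi(v)$. Putting the two bounds together, the one-sided limit exists and equals $\varphi(v)$, as claimed. The extended-real case $\varphi(v)=+\infty$ is absorbed into the lsc bound, which forces $\liminf\ge +\infty$ automatically.

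Honestly, there is no serious obstacle here; the main substance of the lemma is the observation that when the ``far'' endpoint $u$ is weakly dominated in value by the ``base'' endpoint $v$, the usual obstruction that makes quasiconvex functions fail to be continuous (upward jumps as one enters a strict sublevel set) cannot occur at $v$ along the direction toward $u$, because the quasiconvex inequality is attained at $\varphi(v)$ and lsc prevents a downward jump from $v$.
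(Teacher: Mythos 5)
Your proposal is correct and matches the paper's proof exactly: quasiconvexity together with $\varphi(v)\ge\varphi(u)$ gives $\limsup_{t\downarrow 0}\varphi(v+t(u-v))\le\varphi(v)$, and lower semicontinuity at $v$ supplies the matching $\liminf$ bound. No issues.
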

\begin{proof}
	Suppose that $u,v\in X$ and that $\varphi(v)\geq\varphi(u)$. Since $\varphi$ is quasiconvex, for all $t\in ]0,1[$, we have
	$
	\varphi(v+t(u-v)) \le \max\{\varphi(v),\varphi(u)\}=\varphi(v).
	$		
	It follows that $\limsup_{t \downarrow 0} \varphi(v+t(u-v)) \le \varphi(v).$
Combining the latter with the lower semicontinuity of $\varphi$ we get \eqref{QuasiLimit}.
	$\hfill\Box$
\end{proof}
\begin{Lemma}\label{Bode2}
	Let $\varphi: X\rightarrow\overline{\mathbb{R}}$ be a quasiconvex function and $u,v,w \in X$ such that $v\in ]u,w[, \varphi(u)\leq\varphi(w) $. Suppose that there exist $v^* \in X^* $ and $z\in ]u,v[$ such that
$\varphi_{v^*}(z) > \max\{\varphi_{v^*}(u),\varphi_{v^*}(w)\}$. Then
	\begin{equation}\label{inequality}
	\varphi(u)<\varphi(z)\leq\varphi(v)\leq\varphi(w).
	\end{equation}
\end{Lemma}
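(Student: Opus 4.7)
The plan is to establish the three inequalities $\varphi(u)<\varphi(z)$, $\varphi(z)\le\varphi(v)$, $\varphi(v)\le\varphi(w)$ in reverse order of difficulty, using quasiconvexity of $\varphi$ throughout and invoking the hypothesis on $\varphi_{v^*}$ only for the leftmost one.

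First I would dispose of the rightmost inequality: since $v\in[u,w]$ and $\varphi(u)\le\varphi(w)$, quasiconvexity of $\varphi$ gives $\varphi(v)\le\max\{\varphi(u),\varphi(w)\}=\varphi(w)$ immediately.

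For the leftmost inequality I would exploit that $z\in\,]u,v[\,\subset\,]u,w[$: there exists $\lambda\in(0,1)$ with $z=(1-\lambda)u+\lambda w$, whence $\langle v^*,z\rangle=(1-\lambda)\langle v^*,u\rangle+\lambda\langle v^*,w\rangle$. The key trick is to take the convex combination of the two strict inequalities in the hypothesis with weights $1-\lambda$ and $\lambda$: adding
$$(1-\lambda)\bigl[\varphi_{v^*}(z)-\varphi_{v^*}(u)\bigr]>0 \quad\text{and}\quad \lambda\bigl[\varphi_{v^*}(z)-\varphi_{v^*}(w)\bigr]>0$$
causes the terms containing $v^*$ to cancel exactly, leaving $\varphi(z)>(1-\lambda)\varphi(u)+\lambda\varphi(w)$. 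Since $\varphi(u)\le\varphi(w)$ and $\lambda\in(0,1)$, the right-hand side is at least $\varphi(u)$, and therefore $\varphi(u)<\varphi(z)$.

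With $\varphi(u)<\varphi(z)$ in hand, the middle inequality $\varphi(z)\le\varphi(v)$ is forced by quasiconvexity on $[u,v]$: since $z\in[u,v]$ we have $\varphi(z)\le\max\{\varphi(u),\varphi(v)\}$, and the maximum cannot be $\varphi(u)$ by what we just proved, so it must equal $\varphi(v)$. I do not foresee a genuine obstacle here; the whole argument is elementary algebra plus two invocations of the definition of quasiconvexity, and Lemma~\ref{ltttheotia} is not needed. The one step that is not entirely obvious in advance is that cancelling the linear perturbation between the two strict inequalities of the hypothesis produces a lower bound on $\varphi(z)$ by a convex combination of $\varphi(u)$ and $\varphi(w)$; it is precisely this convex-combination bound, together with the assumption $\varphi(u)\le\varphi(w)$, that delivers the strict inequality $\varphi(u)<\varphi(z)$.
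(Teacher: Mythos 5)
Your proof is correct and follows essentially the same elementary route as the paper's: quasiconvexity on the segment handles $\varphi(z)\le\varphi(v)\le\varphi(w)$ exactly as in the paper, and the strict inequality $\varphi(u)<\varphi(z)$ comes from the affine structure of $v^*$ on $[u,w]$ together with $\varphi(u)\le\varphi(w)$. The only (cosmetic) difference is in that first step: the paper first uses quasiconvexity to get $\varphi(z)\le\varphi(w)$, deduces $\langle v^*,z\rangle>\langle v^*,w\rangle$ and hence $\langle v^*,z\rangle<\langle v^*,u\rangle$, and then concludes, whereas your convex-combination of the two hypothesis inequalities cancels the linear term at once and yields $\varphi(z)>(1-\lambda)\varphi(u)+\lambda\varphi(w)\ge\varphi(u)$ without invoking quasiconvexity at that point.
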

\begin{proof}
	Since $z\in ]u,v[\subset ]u,w[, \varphi(u)\leq\varphi(w)$ and $\varphi $ is quasiconvex we have 
	$\varphi (z) \le \max\{\varphi(u), \varphi(w) \}=\varphi (w)$.
	Hence, the latter and the inequality
	$ \varphi_{v^*}(z)>\varphi_{v^*}(w)$ implies that
	$\langle v^*,z\rangle>\langle v^*,w\rangle$. Again, $z\in ]u,w[$ implies 
 $\langle v^*,z\rangle <\langle v^*,u\rangle$.
	Therefore, the inequality $\varphi_{v^*}(u) < \varphi_{v^*}(z)$ yields 
	$\varphi(u) <  \varphi(z)$. Since $z\in]u,v[$ and $v\in ]z,w[$, we deduce $\varphi(z)\leq\varphi(v)\leq\varphi(w)$ from the latter inequality and the quasiconvexity of $\varphi$. Hence, \eqref{inequality} holds.
	$\hfill\Box$
\end{proof}

\begin{Lemma}\label{Tuacuctri}
	Let $\varphi: X \to \overline{\mathbb{R}} $ be a quasiconvex, proper, and lower semicontinuous function, and $v^*\in X^*$. If $\varphi_{v^*}$ is not quasiconvex then there exist $u,v,w\in X$ such that $v\in ]u,w[$ and 
	\begin{equation}\label{dk3}
	\varphi(w)\geq\varphi(v)>\varphi(u),
	\end{equation}
	\begin{equation}\label{dk1}
	\varphi_{v^*}(v) > \max\{\varphi_{v^*}(u),\varphi_{v^*}(w)\} ,
	\end{equation}
	\begin{equation}\label{dk2}
	\forall \gamma >0, \exists v_\gamma \in \mathbb{B}_{\gamma}(v)\cap ]v,w[\;: \varphi_{v^*}(v)> \varphi_{v^*}(v_\gamma).
	\end{equation}
\end{Lemma}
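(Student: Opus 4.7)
The plan has three stages. First, non-quasiconvexity of $\varphi_{v^*}$ supplies $\bar u,\bar w\in X$ and $\lambda\in(0,1)$ with $z^*:=\lambda\bar u+(1-\lambda)\bar w$ satisfying $\varphi_{v^*}(z^*)>\max\{\varphi_{v^*}(\bar u),\varphi_{v^*}(\bar w)\}$; swapping $\bar u$ and $\bar w$ if necessary, I arrange $\varphi(\bar u)\le\varphi(\bar w)$ and set $u:=\bar u$, $w:=\bar w$. The target $v$ will be located strictly inside the open sub-segment $(z^*,w)$; property (\ref{dk3}) will then follow from Lemma~\ref{Bode2}, so the whole technical weight of the proof falls on arranging (\ref{dk1}) and (\ref{dk2}).

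Second, parametrize $[z^*,w]$ by $p(\tau):=z^*+\tau(w-z^*)$ for $\tau\in[0,1]$, and set $H(\tau):=\varphi_{v^*}(p(\tau))$. Then $H$ is lsc on $[0,1]$ with $H(0)>c:=\max\{\varphi_{v^*}(u),\varphi_{v^*}(w)\}\ge H(1)$. Decompose $H=g+\tilde\ell$, with $g(\tau):=\varphi(p(\tau))$ a 1D lsc quasiconvex function and $\tilde\ell$ affine of slope $\alpha$; quasiconvexity of $\varphi$ together with $\varphi(u)\le\varphi(w)$ gives $g(0)\le g(1)$, and $H(0)>H(1)$ then forces $\alpha<0$. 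The 1D lsc quasiconvex $g$ is unimodal: there is $m\in[0,1]$ such that $g$ is nonincreasing on $[0,m]$ and nondecreasing on $[m,1]$. The aim is to produce $\tau^*\in(0,1)$ with $H(\tau^*)>c$ and a sequence $\tau_n\downarrow\tau^*$ satisfying $H(\tau_n)<H(\tau^*)$; setting $v:=p(\tau^*)$ then fulfills (\ref{dk1}) and (\ref{dk2}). If $m>0$, then on $[0,m]$ the sum $H$ is ``nonincreasing plus strictly decreasing'', hence strictly decreasing; lsc of $H$ at $0$ supplies $\varepsilon>0$ with $H>c$ on $[0,\varepsilon)$, so any $\tau^*\in(0,\min\{m,\varepsilon\})$ works. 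If $m=0$, then $g$ is nondecreasing on $[0,1]$ with total variation $g(1)-g(0)<|\alpha|$; on any maximal open sub-interval $(a,b)$ of $\{H>c\}$, the left-continuity of the monotone $g$ yields $H(b^-)=H(b)=c$, so $H$ descends strictly toward $c$ along $(a,b)$, and a careful lsc analysis localises this descent to a right-descending sequence at some $\tau^*\in(a,b)$.

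Third, since $\tau^*>0$, the point $z^*$ lies strictly between $u$ and $v=p(\tau^*)$, i.e.\ $z^*\in(u,v)$. Applying Lemma~\ref{Bode2} to the triple $(u,v,w)$ with $z:=z^*$ --- its hypotheses $v\in(u,w)$, $\varphi(u)\le\varphi(w)$ and $\varphi_{v^*}(z^*)>\max\{\varphi_{v^*}(u),\varphi_{v^*}(w)\}$ are all in force --- delivers $\varphi(u)<\varphi(z^*)\le\varphi(v)\le\varphi(w)$, which is (\ref{dk3}). The principal obstacle is the $m=0$ case in the second stage: an lsc function on a compact interval need not attain its supremum, so the naive ``place $v$ at a peak of $H$'' construction fails; what rescues the argument is the interplay between the strict slope estimate $-\alpha>g(1)-g(0)$ and the left-continuity of the monotone $g$, which together convert the global descent $H(0)>H(1)$ into a local right-descent at an interior point.
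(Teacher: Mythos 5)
Your proof is correct, but it follows a genuinely different route from the paper's. The paper keeps the violation point $v_0$ fixed, forms the level set $\mathscr{L}:=\{z\in\,]u,w[\,:\varphi_{v^*}(z)\geq\varphi_{v^*}(v_0)\}$, and takes $v$ to be the point of $]u,w[$ realizing $r=\inf\{\|z-w\|:z\in\mathscr{L}\}$; the fact that $v\in\mathscr{L}$ (hence \eqref{dk1}) is then extracted from the radial-limit Lemma~\ref{ltttheotia}, and \eqref{dk2} is automatic because everything strictly between $v$ and $w$ lies outside $\mathscr{L}$. You instead reduce to the one-dimensional function $H=g+\tilde\ell$ on $[z^*,w]$ and exploit unimodality of the lsc quasiconvex $g$ (which does hold: $g$ is real-valued and bounded above by $g(1)$ on the segment, attains its minimum by lsc and compactness, and is nonincreasing before and nondecreasing after any minimizer). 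Your left-continuity of the monotone $g$ is essentially Lemma~\ref{ltttheotia} in one-dimensional disguise, so the two proofs lean on the same radial-continuity phenomenon. Your treatment of \eqref{dk3} is actually cleaner than the paper's: since $\tau^*>0$ you can invoke Lemma~\ref{Bode2} with $z=z^*\in\,]u,v[$ explicitly, whereas the paper leaves \eqref{dk3} implicit (and would have to worry about the degenerate case $v=v_0$).

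One spot deserves to be made explicit, since it carries the whole weight of the $m=0$ case: ``a careful lsc analysis localises this descent to a right-descending sequence at some $\tau^*$.'' This does go through, and here is the one-line device: fix any $\tau_0$ in the component $[0,b)$ of $\{H>c\}$ containing $0$, and set $\tau^*:=\inf\{\tau\in[\tau_0,b]:H(\tau)<H(\tau_0)\}$. The set is nonempty (because $H(\tau)\to c<H(\tau_0)$ as $\tau\uparrow b$) and relatively open by lower semicontinuity of $H$, so $\tau^*\in[\tau_0,b)$ does not belong to it; hence $H(\tau^*)\geq H(\tau_0)>c$ while there are $\tau_n\downarrow\tau^*$ with $H(\tau_n)<H(\tau_0)\leq H(\tau^*)$, which is exactly \eqref{dk1} and \eqref{dk2} at $v=p(\tau^*)$. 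Note that once you have this first-entry argument, the decomposition $H=g+\tilde\ell$, the unimodality of $g$, and the slope estimate $-\alpha>g(1)-g(0)$ all become unnecessary: the same construction handles both of your cases simultaneously, using only that $H$ is lsc with $H(0)>c\geq H(1)$. So your proof is sound but can be substantially shortened.
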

\begin{proof}
	Since $\varphi_{v^*}$ is not quasiconvex, there exist $u,w\in X$ such that $u\ne w,\varphi(u)\leq\varphi(w)$ and $v_0\in ]u,w[$ such that $\varphi_{v^*}(v_0)>\max \{ \varphi_{v^*}(u), \varphi_{v^*}(w)\}$. Applying Lemma~\ref{ltttheotia}, we get
$\lim_{t \downarrow 0} \varphi(w+ t(u - w)) = \varphi(w)$, and so
$\lim_{t \downarrow 0} \varphi_{v^*}(w+ t(u - w)) = \varphi_{v^*}(w).$
	Since $\varphi_{v^*}(w)<\varphi_{v^*}(v_0)$, there exists $t_0\in ]0,1[$ such that 
	\begin{equation}\label{continuity}
	\varphi_{v^*}(w+ t(u - w))<\varphi_{v^*}(v_0), \quad \forall t\in]0,t_0[.
	\end{equation}
	Consider the set
	$$
	\mathscr{L}:=\{z\in ]u,w[:\varphi_{v^*}(z)\geq \varphi_{v^*}(v_0)\}.
	$$
	Clearly, $\mathscr{L}\ne\emptyset$ and for each $z\in\mathscr{L}$ we have 
	$\|z-w\|\geq t_0\|u-w\|$ by \eqref{continuity}. It follows that
	$$
	r:=\inf\{\|z-w\|:z\in\mathscr{L}\}\in [t_0\|u-w\|,\|u-w\|[\;\subset\;]0,\|u-w\|[,
	$$
$$
v:=w+r\frac{u-w}{\|u-w\|}\in\;]u,w[.
$$
We will show that $v\in \mathscr{L}$ and so  \eqref{dk1} holds. Suppose on the contrary that $v\notin\mathscr{L}$. Then $v_0\in]u,v[$ and  we get $\varphi(u)<\varphi(v_0)\leq\varphi(v)\leq\varphi(w)$ by Lemma~\ref{Bode2}.
	Applying Lemma~\ref{ltttheotia},  we get
$\lim_{t\downarrow 0}\varphi(v+t(u-v))=\varphi(v)$, and so $\lim_{t\downarrow 0}\varphi_{v^*}(v+t(u-v))=\varphi_{v^*}(v)$. By the definition of $r$, there exists 
a sequence $(z_n)\subset\mathscr{L}$ such that $\|z_n-w\|\rightarrow r$ and $\|z_n-w\|>r$ for all $n\in\mathbb{N}$. Therefore,
\begin{eqnarray*}
	\varphi_{v^*}(v)&=&\lim_{t\downarrow 0}\varphi_{v^*}(v+t(u-v))\\
	&=&\lim\varphi_{v^*}\left(v+\frac{\|z_n-w\|-r}{\|u-v\|}(u-v)\right)\\
		&=&\lim\varphi_{v^*}\left(v-\frac{r}{\|u-v\|}(u-v)+\frac{\|z_n-w\|}{\|u-v\|}(u-v)\right)\\
		&=&\lim\varphi_{v^*}\left(v-\frac{r}{\|u-w\|}(u-w)+\frac{\|z_n-w\|}{\|u-w\|}(u-w)\right)\\
		&=&\lim\varphi_{v^*}\left(w+\frac{\|z_n-w\|}{\|u-w\|}(u-w)\right)\\
				&=&\lim\varphi_{v^*}\left(z_n\right)\\
				&\geq&\varphi_{v^*}(v_0),
\end{eqnarray*}
which is a contradiction. Now we show that $v$ satisfies \eqref{dk2}.
Let $\gamma$ be any positive real number and
$$
v_\gamma:=w+\frac{r-r_\gamma}{\|u-w\|}(u-w)\;\text{with}\; r_\gamma:=\min\{r/2, \gamma/2\}>0.
$$
Since $0<r-r_\gamma<r<\|u-w|$, it implies that $v_\gamma\in]v,w[\;\setminus\;\mathscr{L}$. Therefore, $\varphi_{v^*}(v_\gamma)<\varphi_{v^*}(v_0)\leq\varphi_{v^*}(v)$.
Furthermore,
\begin{eqnarray*}
	\|v_\gamma-v\|&=& \left\|w+\frac{r-r_\gamma}{\|u-w\|}(u-w)-w-r\frac{u-w}{\|u-w\|}\right\|=r_\gamma<\gamma.
\end{eqnarray*}
Hence, $v$ satisfies \eqref{dk2}.
$\hfill\Box$	
\end{proof}

\begin{Theorem}\label{Cr} Let $\varphi:X\rightarrow\overline{\mathbb{R}}$ be a proper lower semicontinuous function on a Banach space $X$, and $\alpha >0$. Consider the following statements
	\begin{itemize}
		\item[{\rm(a)}] $\varphi$ is $\alpha-$robustly quasiconvex;
		\item[{\rm(b)}] For every $x,y\in X$
		\begin{align} \label{Robust_sufficient condition 1}
		\varphi(y)\leq\varphi(x)\;\Longrightarrow\;\langle x^*,y-x\rangle\leq-\min\left\{\alpha\|y-x\|,\varphi(x)-\varphi(y)\right\},\; \forall x^*\in\widehat{\partial}\varphi(x).
		\end{align}
		
	\end{itemize}
	Then {\rm (a)}$\Rightarrow${\rm (b)}. Additionally, if $X$ is an Asplund space, then {\rm (b)}$\Rightarrow${\rm (a)}.
\end{Theorem}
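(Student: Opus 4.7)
My plan for $(a)\Rightarrow(b)$ is a Hahn--Banach scaling argument. Given $x,y\in X$ with $\varphi(y)\le\varphi(x)$ and $x^*\in\widehat{\partial}\varphi(x)$, the case $y=x$ is trivial so assume $y\ne x$. Pick $z^*\in X^*$ with $\|z^*\|=1$ and $\langle z^*,y-x\rangle=\|y-x\|$. For any $0\le t<\min\{\alpha,(\varphi(x)-\varphi(y))/\|y-x\|\}$, the functional $v^*:=tz^*$ satisfies $\|v^*\|<\alpha$ and $\varphi_{v^*}(y)\le\varphi_{v^*}(x)$. Since hypothesis (a) makes $\varphi_{v^*}$ quasiconvex, Theorem~\ref{fod}(b) applied to $x^*+v^*\in\widehat{\partial}\varphi_{v^*}(x)$ forces $\langle x^*+v^*,y-x\rangle\le 0$, which rearranges to $\langle x^*,y-x\rangle\le -t\|y-x\|$. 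Letting $t$ tend to the supremum yields (b); only Hahn--Banach is used, so this direction is valid in any Banach space.

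For the harder direction $(b)\Rightarrow(a)$ in an Asplund space I argue by contradiction. Since the right-hand side of (b) is automatically $\le 0$, condition (b) implies Theorem~\ref{fod}(b), so $\varphi$ is quasiconvex. Assuming $\varphi_{v^*}$ fails to be quasiconvex for some $v^*$ with $\|v^*\|<\alpha$, Lemma~\ref{Tuacuctri} produces collinear $u,v,w$ with $v\in\,]u,w[\,$ and, for each $\gamma>0$, a point $v_\gamma\in\,]v,w[\,\cap\mathbb{B}_{\gamma}(v)$ verifying (\ref{dk3})--(\ref{dk2}). I then apply Lemma~\ref{Distance} to the l.s.c.\ function $\varphi_{v^*}$ with the triple $(v_\gamma,v,u)$: its hypotheses $v\in[v_\gamma,u]$ and $\varphi_{v^*}(v)>\varphi_{v^*}(v_\gamma)$ both hold, so for every $\lambda>0$ we obtain $\bar x\in\mathbb{B}_{\lambda}([v_\gamma,v])$ and $\bar x^*=\bar x_0^*+v^*\in\widehat{\partial}\varphi_{v^*}(\bar x)$, with $\bar x_0^*\in\widehat{\partial}\varphi(\bar x)$, satisfying $\langle\bar x^*,u-\bar x\rangle>0$. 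Quasiconvexity of $\varphi$ together with $\varphi(v)>\varphi(u)$ forces $\varphi(z)>\varphi(u)$ for every $z\in[v_\gamma,v]$ (otherwise the segment $[u,z]$ would contain $v$ with $\varphi(v)\le\varphi(u)$), and the AMVT sequence underlying Lemma~\ref{Distance} provides $\bar x$ with $\varphi(\bar x)>\varphi(u)$. Condition (b) at $(\bar x,u)$ then reads
\[
\langle\bar x_0^*,u-\bar x\rangle\le-\min\{\alpha\|u-\bar x\|,\,\varphi(\bar x)-\varphi(u)\},
\]
and combining with $\langle\bar x_0^*+v^*,u-\bar x\rangle>0$ and $|\langle v^*,u-\bar x\rangle|\le\|v^*\|\|u-\bar x\|$ produces $\min\{\alpha\|u-\bar x\|,\varphi(\bar x)-\varphi(u)\}<\|v^*\|\|u-\bar x\|$. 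Since $\|v^*\|<\alpha$, the first term cannot attain the minimum, so $\varphi(\bar x)-\varphi(u)<\|v^*\|\|u-\bar x\|$.

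The main obstacle is to upgrade this local bound into a genuine contradiction. Letting $\gamma\to 0$ (so $\bar x\to v$) and using lower semicontinuity of $\varphi$ yields only $\varphi(v)-\varphi(u)\le\|v^*\|\|u-v\|$, which alone is consistent with the remaining data. To close the argument I plan to sharpen the lower bound on $\langle\bar x^*,u-\bar x\rangle$ by unpacking the quantitative form of the approximate mean value theorem behind Lemma~\ref{Distance}: applied to $\varphi_{v^*}$ on $[v_\gamma,v]$ it delivers a minimizer $c_\gamma\in[v_\gamma,v)$ and a sequence $\bar x_k\overset{\varphi_{v^*}}{\to}c_\gamma$ with $\liminf_k\langle\bar x_k^*,u-\bar x_k\rangle\ge m_\gamma\|u-c_\gamma\|$ for the strictly positive slope $m_\gamma=(\varphi_{v^*}(v)-\varphi_{v^*}(v_\gamma))/\|v-v_\gamma\|$. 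Writing $\beta=-\langle v^*,(w-u)/\|w-u\|\rangle\in(0,\|v^*\|]$ and noting the identity $m_\gamma-\beta=(\varphi(v)-\varphi(v_\gamma))/\|v-v_\gamma\|$, combination of this lower bound with the upper bound from (b) reduces the question to the sign of $\varphi(v)-\varphi(v_\gamma)$; the interplay of (\ref{dk2}) with the lower semicontinuity of $\varphi$ at $v$ is expected to force this sign positive and to close the contradiction via the gap $\alpha-\|v^*\|>0$. The delicate case is exactly when $\varphi(\bar x)-\varphi(u)$, not $\alpha\|u-\bar x\|$, realizes the minimum in (b), so that the clean slope inequality $\langle\bar x_0^*,u-\bar x\rangle\le-\alpha\|u-\bar x\|$ is unavailable and one must track both the sequence version of Lemma~\ref{Distance} and the quasiconvexity-driven limits $\varphi(c_\gamma)\to\varphi(v)$ and $v_\gamma\to v$ simultaneously.
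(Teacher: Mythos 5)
Your argument for (a)$\Rightarrow$(b) is correct and is essentially the paper's: both perturb $\varphi$ by a Hahn--Banach functional aligned with $y-x$ and exploit quasiconvexity of the perturbed function. Your version routes the conclusion through Theorem~\ref{fod}(b) and a limit in $t$, which neatly replaces the paper's two-case split and its direct estimate from the definition of $\widehat{\partial}\varphi$. (The degenerate case $\varphi(x)=\varphi(y)$, where your admissible range of $t$ is empty, is covered by $t=0$, i.e.\ by plain quasiconvexity.)

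The direction (b)$\Rightarrow$(a) has a genuine gap, which you yourself flag. The skeleton is right (quasiconvexity via Theorem~\ref{fod}, then Lemma~\ref{Tuacuctri}, then Lemma~\ref{Distance} applied to $\varphi_{v^*}$ on the triple $(v_\gamma,v,u)$, then condition (b) at $(\bar x,u)$), but you discard exactly the piece of information that closes the argument. From $\langle \bar x_0^*+v^*,u-\bar x\rangle>0$ and (b) you correctly obtain
\[
\min\{\alpha\|u-\bar x\|,\;\varphi(\bar x)-\varphi(u)\}\;<\;\langle v^*,u-\bar x\rangle\;\le\;\|v^*\|\,\|u-\bar x\|\;<\;\alpha\|u-\bar x\|,
\]
so the minimum is attained by $\varphi(\bar x)-\varphi(u)$; the mistake is to then weaken $\langle v^*,u-\bar x\rangle$ to $\|v^*\|\,\|u-\bar x\|$. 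Keeping the pairing gives $\varphi(\bar x)-\varphi(u)<\langle v^*,u-\bar x\rangle$, i.e.\ $\varphi_{v^*}(\bar x)<\varphi_{v^*}(u)$. Now \eqref{dk1} gives $\varphi_{v^*}(v)>\varphi_{v^*}(u)$, so by lower semicontinuity of $\varphi_{v^*}$ there is $\gamma>0$ with $\varphi_{v^*}(z)>\varphi_{v^*}(u)$ for all $z\in\mathbb{B}_{\gamma}(v)$; choosing $v_\gamma$ by \eqref{dk2} and $\lambda=\gamma-\|v_\gamma-v\|$ in Lemma~\ref{Distance} forces $\bar x\in\mathbb{B}_{\gamma}(v)$, a contradiction. (The same neighborhood argument applied to $\varphi$ itself also yields $\varphi(\bar x)>\varphi(u)$ more cleanly than your appeal to the AMVT sequence inside Lemma~\ref{Distance}, whose points need not lie on $[v_\gamma,v]$.) Your proposed repair via the quantitative mean value inequality and the slopes $m_\gamma$ is therefore unnecessary, and as written it is left unfinished (``is expected to force \dots''), so the proof does not actually reach a contradiction. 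The paper's own proof follows precisely the route just described, with one extra twist: it dilates $v^*$ to $\bar v^*=(1+\delta)v^*$ (still of norm $<\alpha$) before invoking Lemma~\ref{Distance}, and contradicts the corresponding neighborhood inequality for $\varphi_{\bar v^*}$.
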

\begin{proof} Suppose that $\varphi$ is $\alpha-$robustly quasiconvex, and $x,y\in X$ satisfy $\varphi(y)\leq\varphi(x)$. Assume that $x^*\in \widehat{\partial}\varphi(x)$. We will prove
	$$
	\langle x^*,y-x\rangle\leq-\min\left\{\alpha\|y-x\|,\varphi(x)-\varphi(y)\right\}.
	$$ 
	If $x=y$, the above inequality is trivial.
	Otherwise, we consider two cases:
	
	\medskip\noindent
	\textbf{Case 1.} $\quad\alpha\|y-x\|\leq\varphi(x)-\varphi(y)$
	
	\noindent
	We then need to prove that
	\begin{equation}\label{T32-00}
	\langle x^*,y-x\rangle\leq-\alpha\|y-x\|.
	\end{equation}
	By the Hahn-Banach theorem, there exists $v^*\in X^*$, $\|v^*\|=1$ such that $\langle v^*,y-x\rangle=\|y-x\|$. Consider the function $f:X\rightarrow\overline{\mathbb{R}}$ given by
	$$
	f(z)=\varphi(z)+\alpha\langle v^*,z-x\rangle\quad \forall z\in X.
	$$
	Then $f(x)=\varphi(x)$, and 
	$$
	f(y)=\varphi(y)+\alpha\langle v^*,y-x\rangle	=\varphi(y)+\alpha\|y-x\|\leq\varphi(x)=f(x),
	$$
	i.e., $\max\{f(x),f(y)\}=f(x)$.
Since $\varphi$ is $\alpha-$robustly quasiconvex, $f$ is quasiconvex. 
	Therefore for each $t\in[0,1]$, we always have
	\begin{eqnarray*}
		\varphi(x)=f(x)=\max\{f(x),f(y)\}&\geq&f(x+t(y-x))\\
		&=&\varphi(x+t(y-x))+t\alpha\langle v^*,y-x\rangle\\
		&=& \varphi(x+t(y-x))+t\alpha\|y-x\|,
	\end{eqnarray*}
which implies that
	\begin{equation}\label{T32-0}
	\varphi(x)-t\alpha\|y-x\|\ge \varphi(x+t(y-x)).
	\end{equation}
	Since $x^*\in\widehat{\partial}\varphi(x)$, for any $\gamma>0$, there exists a number $r>0$ such that
	\begin{equation}\label{T32}
	\varphi(z)\geq\varphi(x)+\langle x^*,z-x\rangle-\gamma\|z-x\|\quad \forall z\in \mathbb{B}_r(x).
	\end{equation}
	Let $t\in ]0,1[$ such that  $x+t(y-x)\in \mathbb{B}_r(x)$. It follows from \eqref{T32-0} and \eqref{T32} that
	$$
	\varphi(x)-t\alpha\|y-x\|\geq \varphi(x)+ t\langle x^*,y-x\rangle-t\gamma\|y-x\|,
	$$
and so	
\begin{equation}\label{T32-1}
	\langle x^*,y-x\rangle\leq-\alpha\|y-x\|+\gamma\|y-x\|.
	\end{equation}
	On taking limit  on both sides of the above inequality as $\gamma\rightarrow 0^+$, we get
	\eqref{T32-00}.
	
	\medskip\noindent
	\textbf{Case 2.} $\quad\alpha\|y-x\|>\varphi(x)-\varphi(y)$
	
	\noindent	
	We have $\bar{\alpha}\|y-x\|=\varphi(x)-\varphi(y)$, where 
	$$
	\bar{\alpha}:=\frac{\varphi(x)-\varphi(y)}{\|y-x\|}\in ]0,\alpha[.
	$$
	Since $\varphi$ is 
	$\bar\alpha-$robustly quasiconvex, we derive from Case 1 that
	\begin{eqnarray*}
		\langle x^*,y-x\rangle&\leq&-\bar\alpha\|y-x\|
		=\varphi(y)-\varphi(x)\\
		&=&
		-\min\left\{\alpha\|y-x\|,\varphi(x)-\varphi(y)\right\}.
	\end{eqnarray*}
	
\medskip	
Conversely, assume that $X$ is Asplund, and (b) holds. 
 It follows from Theorem~\ref{fod} that  $\varphi$ is quasiconvex. 
	Suppose that $\varphi$ is not $\alpha-$robustly quasiconvex, i.e., there exists $v^*\in X^*\setminus\{0\}, \|v^*\|<\alpha$ such that $\varphi_{v^*}$ is not quasiconvex. By Lemma~\ref{Tuacuctri}, there are $u,w\in X$ and $v\in]u,w[$ satisfying \eqref{dk3},\eqref{dk1}, and \eqref{dk2}. Since $\varphi_{v^*}(v)>\varphi_{v^*}(u)$, there exists $\delta>0$ such that $\bar v^*:=(1+\delta)v^*$ satisfies $\|\bar v^*\|<\alpha$ and
	$\varphi_{\bar v^*}(v)>\varphi_{\bar v^*}(u)$.
	Thus, we have $\varphi(v)>\varphi(u)$, $\varphi_{v^*}(v)>\varphi_{v^*}(u)$, $\varphi_{\bar v^*}(v)>\varphi_{\bar v^*}(u)$ and the lower semicontinuity of $\varphi, \varphi_{v^*}$, and $\varphi_{\bar{v}^*}$. This implies the existence of $\gamma>0$ satisfying
\begin{equation}\label{inequal}
\varphi(z)>\varphi(u),\quad \varphi_{v^*}(z)>\varphi_{v^*}(u),\quad
\varphi_{\bar v^*}(z)>\varphi_{\bar v^*}(u)\quad \forall z\in \mathbb{B}_\gamma(v).
\end{equation}
	By the assertion \eqref{dk2}, there is $v_\gamma\in\mathbb{B}_\gamma(v)\cap ]v,w[$ such that $\varphi_{v^*}(v)>\varphi_{v^*}(v_\gamma)$. Then, $v_\gamma$ can be written as 
	$$
	v_\gamma:=v+\lambda(w-v)\;
	\text{with}\; \lambda\in \left]0, \min\left\{1,\dfrac{\gamma}{\Vert w-v\Vert}\right\}\right].
	$$
	Since $\varphi_{v^*}(v)>\varphi_{v^*}(w)$ and $\varphi(v)\leq\varphi(w)$, we have
	$\langle v^*,w-v\rangle<0$ and so
	\begin{eqnarray*}
		\varphi_{\bar v^*}(v_\gamma)-\varphi_{\bar v^*}(v)&=&\varphi_{ v^*}(v_\gamma)-\varphi_{v^*}(v)+\delta\langle v^*,v_\gamma-v\rangle\\
		&=&\varphi_{ v^*}(v_\gamma)-\varphi_{v^*}(v)+\delta\lambda\langle v^*,w-v\rangle
		< 0.
	\end{eqnarray*}
	Applying Lemma~\ref{Distance} for $\varphi_{\bar v^*}$, $v\in[v_\gamma, u]$ with $\varphi_{\bar v^*}(v)>\varphi_{\bar v^*}(v_\gamma)$, there exist $x\in\text{dom}\varphi_{\bar v^*}$ and $x^*\in\widehat\partial\varphi_{\bar v^*}(x)$ such that
	\begin{equation}\label{r4}
	x\in[v_\gamma, v]+(r-\|v_\gamma-v\|) \mathbb{B}\quad\text{and}\quad\langle x^*,u-x\rangle>0.
	\end{equation}
	Then $x\in \mathbb{B}_\gamma(v)$ and so $\varphi(x)>\varphi(u)$ by \eqref{inequal}.
	By the assumption (b) and the second inequality of \eqref{r4},
	$$
	-\langle \bar v^*,u-x\rangle<\langle x^*-\bar v^*,u-x\rangle\leq-\min\{\alpha\|u-x\|,\varphi(x)-\varphi(u)\}.
	$$
	Since $\langle \bar v^*,u-x\rangle\leq\|\bar v^*\|\|u-x\|<\alpha\|u-x\|$, the above inequality implies that
	$\langle \bar v^*,u-x\rangle>\varphi(x)-\varphi(u)$, i.e.,  $\varphi_{\bar v^*}(x)<\varphi_{\bar v^*}(u)$ and this contradicts \eqref{inequal}.
$\hfill\Box$
\end{proof}

\medskip
We next construct a completely new characterization for the robust quasiconvexity. It is based on the equivalence of the quasiconvexity of a lower semicontinuous function and the quasimonotonicity of its subdifferential operator.

\begin{Theorem}\label{Dactrunglqdondieu}
	Let $\varphi:X\rightarrow\overline{\mathbb{R}}$ be proper, lower semicontinuous on an Asplund space $X$ and $\alpha>0$. Then, $\varphi$ is $\alpha-$robustly quasiconvex if and only if for any 
	$(x,x^*),(y,y^*)\in \text{graph }{\widehat{\partial}}\varphi$, we have
	\begin{equation}\label{Characterizarion}	
	\min\{\langle x^*,y-x\rangle, \langle y^*,x-y\rangle\}> -\alpha\Vert y-x\Vert \Longrightarrow \langle x^*-y^*, x-y\rangle \geq 0.
	\end{equation}
\end{Theorem}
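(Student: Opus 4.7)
The plan is to reduce both directions to Theorem~\ref{fod}(a)$\Leftrightarrow$(c) applied to each perturbation $\varphi_{v^*}$, using the elementary identity $\widehat{\partial}\varphi_{v^*}(x) = \widehat{\partial}\varphi(x) + v^*$ (since the map $x\mapsto\langle v^*,x\rangle$ is smooth, shifting the Fr\'echet subdifferential by $v^*$). Under this identity, the $\alpha$-robust quasiconvexity of $\varphi$ is equivalent to the quasimonotonicity of $\widehat{\partial}\varphi_{v^*}$ for every $v^*\in X^*$ with $\|v^*\|<\alpha$, and the theorem amounts to rewriting this uniform family of quasimonotonicities in terms of $\widehat{\partial}\varphi$ alone.

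For the direction ``$\Rightarrow$'', assume $\varphi$ is $\alpha$-robustly quasiconvex, take $(x,x^*),(y,y^*)\in\text{graph }\widehat{\partial}\varphi$ satisfying the strict inequality in~\eqref{Characterizarion}, and argue by contradiction. The case $x=y$ is vacuous (the premise fails), so suppose $x\neq y$ and $\langle x^*-y^*,x-y\rangle<0$. The latter is $-\langle x^*,y-x\rangle<\langle y^*,x-y\rangle$; combining with the premise yields that the open interval
$$
I=\bigl(\max\{-\langle x^*,y-x\rangle,-\alpha\|y-x\|\},\;\min\{\langle y^*,x-y\rangle,\alpha\|y-x\|\}\bigr)
$$
is nonempty. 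Pick $c\in I$, use Hahn-Banach to obtain $w^*\in X^*$ with $\|w^*\|=1$ and $\langle w^*,y-x\rangle=\|y-x\|$, and set $v^*:=(c/\|y-x\|)w^*$, so that $\|v^*\|<\alpha$ and $\langle v^*,y-x\rangle=c$. By hypothesis $\varphi_{v^*}$ is quasiconvex, hence $\widehat{\partial}\varphi_{v^*}$ is quasimonotone by Theorem~\ref{fod}; but $x^*+v^*\in\widehat{\partial}\varphi_{v^*}(x)$, $y^*+v^*\in\widehat{\partial}\varphi_{v^*}(y)$, and the choice $c\in I$ makes both $\langle x^*+v^*,y-x\rangle$ and $\langle y^*+v^*,x-y\rangle$ strictly positive, a contradiction.

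For ``$\Leftarrow$'', assume~\eqref{Characterizarion} and fix any $v^*$ with $\|v^*\|<\alpha$; I show $\widehat{\partial}\varphi_{v^*}$ is quasimonotone, from which Theorem~\ref{fod} and the remark following Definition~\ref{Def} will give $\alpha$-robust quasiconvexity of $\varphi$. Let $(x,a^*),(y,b^*)\in\text{graph }\widehat{\partial}\varphi_{v^*}$, write $a^*=x^*+v^*$, $b^*=y^*+v^*$ with $x^*,y^*\in\widehat{\partial}\varphi$, and suppose for contradiction that $\langle a^*,y-x\rangle>0$ and $\langle b^*,x-y\rangle>0$. Then
$$
\langle x^*,y-x\rangle > -\langle v^*,y-x\rangle \geq -\|v^*\|\,\|y-x\| > -\alpha\|y-x\|,
$$
and symmetrically $\langle y^*,x-y\rangle>-\alpha\|y-x\|$, so~\eqref{Characterizarion} yields $\langle x^*-y^*,x-y\rangle\geq 0$, i.e.\ $\langle a^*-b^*,x-y\rangle\geq 0$; but the two strict sign hypotheses force $\langle a^*-b^*,x-y\rangle=-\langle a^*,y-x\rangle-\langle b^*,x-y\rangle<0$, a contradiction.

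The main obstacle I anticipate is the Hahn-Banach construction in the forward direction: verifying that the interval $I$ is indeed nonempty requires simultaneously using the strict form of the premise and the negation of the conclusion, and then calibrating $\|v^*\|$ so that it is strictly less than $\alpha$ while producing a prescribed value of $\langle v^*,y-x\rangle$. The rest is bookkeeping with the shift identity for $\widehat{\partial}\varphi_{v^*}$ together with a direct appeal to Theorem~\ref{fod}.
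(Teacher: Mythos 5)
Your proof is correct and follows essentially the same route as the paper: both directions reduce to the equivalence of quasiconvexity and quasimonotonicity of the Fr\'echet subdifferential (Theorem~\ref{fod}) applied to the perturbations $\varphi_{v^*}$, using the shift identity $\widehat{\partial}\varphi_{v^*}=\widehat{\partial}\varphi+v^*$ and a Hahn--Banach construction of $v^*$. The only (harmless) difference is in the forward direction, where the paper fixes the minimizing side, takes $r$ slightly above $-\langle x^*,(y-x)/\|y-x\|\rangle$ and passes to the limit to get $\langle x^*-y^*,x-y\rangle\ge 0$, whereas you argue by contradiction and pick a single $c$ in a nonempty open interval to violate quasimonotonicity of $\widehat{\partial}\varphi_{v^*}$ outright, which slightly streamlines the conclusion.
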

\begin{proof} Suppose that $\varphi$ is $\alpha-$robustly quasiconvex and that there exist $(x,x^*),(y,y^*)\in $ graph $\widehat{\partial}\varphi$ such that
	\begin{equation}
	\label{e}
	\min\{\langle x^*,y-x\rangle, \langle y^*,x-y\rangle\}> -\alpha\Vert y-x\Vert.
	\end{equation}
	Since $\varphi$ is quasiconvex, $\widehat\partial\varphi$ is quasimonotone by Theorem~\ref{fod}. It follows that
	\begin{equation}\label{Characterization_Quasiconvex}
	\min\{\langle x^*,y-x\rangle, \langle y^*,x-y\rangle\}\leq 0.
	\end{equation}	
	Combining \eqref{e} and \eqref{Characterization_Quasiconvex}, we have
	$$
	0\leq-\min\left\{\left\langle x^*,\frac{y-x}{\|y-x\|}\right\rangle, \left\langle y^*,\frac{x-y}{\|x-y\|}\right\rangle\right\}<\alpha.
	$$
	Without loss of generality, we may assume 
	$$
	\left\langle x^*,\frac{y-x}{\|y-x\|}\right\rangle=\min\left\{\left\langle x^*,\frac{y-x}{\|y-x\|}\right\rangle, \left\langle y^*,\frac{x-y}{\|x-y\|}\right\rangle\right\}.
	$$
	Let $r>0$ be such that
	\begin{equation}\label{Chossing_r}
	-\left\langle x^*,\frac{y-x}{\|y-x\|}\right\rangle<r\leq\alpha.
	\end{equation}
	By the Hahn-Banach theorem, there exists $v^*\in X^*$ satisfying
$\langle v^*, y-x\rangle = r\Vert y-x\Vert$ and $\Vert v^*\Vert=r\leq\alpha$.
It follows that
\begin{equation}\label{22}
\langle x^*,y-x\rangle +\langle v^*,y-x\rangle > -r\|y-x\|+r\|y-x\|=0.
\end{equation}
	Consider $\varphi_{v^*}:X\rightarrow\overline{\mathbb{R}}$ given by
	$\varphi_{v^*}(u)=\varphi(u)+\langle v^*,u\rangle$ for any $u\in X$.
	Then, we have
	$
	\widehat{\partial}\varphi_{v^*}(u)=\widehat{\partial}\varphi(u)+v^*$ for $u\in\text{dom}\varphi$. Hence, by the quasiconvexity of $\varphi_{v^*}$ and by Theorem~\ref{fod}, we have
	$$
	\min \{\langle x^*, y-x\rangle+\langle v^*, y-x\rangle, \langle y^*, x-y\rangle+\langle v^*, x-y\rangle\}\leq 0.
	$$
	Combining with \eqref{22}, it implies
	$$
	\langle y^*, x-y\rangle+\langle v^*, x-y\rangle\leq 0,\;\text{i.e.},\;
	\langle y^*, x-y\rangle\leq \langle v^*,y-x\rangle=r\Vert x-y\Vert.
	$$	
	Letting
	$
	r\rightarrow -\left\langle x^*,\frac{y-x}{\|y-x\|}\right\rangle
	$,
	we obtain $\langle y^*, x-y\rangle\leq \langle x^*,x-y\rangle$ and thus \eqref{Characterizarion} holds.
	
	\medskip
	Conversely, assume that \eqref{Characterizarion} holds for all $x,y\in X$ and
	$x^*\in\widehat{\partial}\varphi(x), y^*\in\widehat{\partial}\varphi(y)$.
	Taking any $v^*$ in $\alpha \mathbb{B}^*$, we next prove that  $\varphi_{v^*}:X\rightarrow\mathbb{R}$, defined by 
	$\varphi_{v^*}(u)=\varphi(u)+\langle v^*,u\rangle$ for any $u\in X$,
	is quasiconvex by showing the quasimonotonicity of $\widehat\partial\varphi_{v^*}$.
	Taking any $x,y\in X$ and $x^*\in \widehat{\partial}\varphi_{v^*}(x)$, $y^*\in\widehat{\partial}\varphi_{v^*}(y)$, then
 $x^*-v^*\in \widehat{\partial}\varphi(x)$, $y^*-v^*\in \widehat{\partial}\varphi(y)$. 
	We then consider two cases.

\medskip\noindent
\textbf{Case 1}. $\quad\min\{\langle x^*-v^*,y-x\rangle, \langle y^*-v^*,x-y\rangle\}\leq -\alpha\Vert y-x\Vert$
		
		Without loss of generality, assume that
		$$
		\langle x^*-v^*,y-x\rangle =\min\{\langle x^*-v^*,y-x\rangle, \langle y^*-v^*,x-y\rangle\}.
		$$
		Since $\Vert v^*\Vert\leq \alpha$, we have
		\begin{eqnarray*}
			\min\{ \langle x^*,y-x\rangle, \langle y^*, x-y\rangle\}&\leq&\langle x^*,y-x\rangle
			=\langle x^*-v^*,y-x\rangle +\langle v^*, y-x\rangle\\
			&\leq& -\alpha \Vert y-x\Vert +\Vert v^*\Vert \Vert y-x\Vert
			\leq 0.
		\end{eqnarray*}
\textbf{Case 2}. $\quad\min\{\langle x^*-v^*,y-x\rangle, \langle y^*-v^*,x-y\rangle\}> -\alpha\Vert y-x\Vert$
		
		Since \eqref{Characterizarion} is satisfied, we have 
		$$
		\langle (x^*-v^*)-(y^*-v^*), x-y\rangle \geq 0,
		$$
		i.e., $\langle x^*-y^*, x-y\rangle \geq 0$. It implies that
		\begin{eqnarray*}
			2\min\{ \langle x^*,y-x\rangle, \langle y^*, x-y\rangle\}\leq\langle x^*,y-x\rangle+\langle y^*, x-y\rangle\leq 0.
		\end{eqnarray*}
Hence, $\widehat{\partial}\varphi_{v^*}$ is quasimonotone and thus $\varphi_{v^*}$ is quasiconvex for any $v^*\in\alpha \mathbb{B}^*$ by Theorem~\ref{fod}. This yields the $\alpha$-robust quasiconvexity of $\varphi$.
	$\hfill\Box$
\end{proof}

\section{Conclusions}
Using Fr\'echet subdifferentials, we have obtained two first-order characterizations for the robust quasiconvexity of lower semicontinuous functions in Asplund spaces.  The first one is a generalization of \cite[Proposition~5.3]{BarronGoebelJensen121} from finite dimensional spaces to Asplund spaces and its proof also overcomes a glitch in the proof of the sufficient condition of \cite[Proposition~5.3]{BarronGoebelJensen121}. The second criterion is totally new and it is settled from the equivalence of the quasiconvexity of lower semicontinuous functions and the quasimonotonicity
of their subdifferential operators. Further investigations are needed to apply those characterizations in partial differential equations with connections to differential geometry, mean curvature, tug-of-war games, and stochastic optimal control \cite{BarronGoebelJensen12,BarronGoebelJensen121,BarronGoebelJensen13}.

\subsection*{Acknowlegement}
This work was completed while the second author was
visiting Vietnam Institute for Advanced Study in Mathematics (VIASM). He would like to thank VIASM for the very kind support and hospitality.

\end{document}